\numberwithin{equation}{section}
\theoremstyle{plain}
\newtheorem{exam}{Example}[section]
\newtheorem{theorem}[exam]{Theorem}
\newtheorem{remark}[exam]{Remark}
\begin{document}
	\sloppy
	\captionsetup[figure]{labelfont={bf},name={Fig.},labelsep=period}
	\captionsetup[table]{labelfont={bf},name={Table},labelsep=period}

	\title{Constant vorticity
		two-layer water flows in the  $\beta$-plane
		approximation with centripetal forces\footnote{  This paper was jointly supported from Natural Science Foundation of Zhejiang Province (No.LZ23A010001), National Natural Science Foundation of China (No. 11671176, 11931016).}	}
	\author
	{
		Yuchao He$^{a}$\,\,\,\,\,Yongli Song$^{b}$\,\,\,\,
		Yonghui Xia$^{a}\footnote{Corresponding author.   yhxia@zjnu.cn;xiadoc@163.com }$
		\\
		{\small \textit{$^a$ School of Mathematical  Science,  Zhejiang Normal University, 321004, Jinhua, China}}\\
		{\small Email:  YuchaoHe@zjnu.edu.cn; xiadoc@163.com; yhxia@zjnu.cn}
		\\
		{\small \textit{$^b$ School of Mathematics,  Hangzhou Normal University,  311121,  Hangzhou, China}}\\
		{\small Email: songyl@hznu.edu.cn}
	}
	\maketitle
	\begin{abstract}
		The  constant vorticity {\bf two-layer water wave} in the $\beta$-plane  approximation with centripetal forces is investigated in this paper. Different from the works (Chu and Yang\cite[JDE, 2020]{chu} and Chu and Yang \cite[JDE,  2021]{chu2}) on the singe-layer wave flows, we consider the  two-layer water wave model containing
		a free surface and an interface.  The interface separates two layers with different features such as velocity field, pressure and vorticity.	 We prove that if the change in pressure in the $y$-axis direction is bounded, then the pressure is a function only related to depth and the surfaces of the water flows. And the inner wave will not affect the pressure function, if the water flow densities in each layer are equal. Furthermore,  the explicit expressions of the velocity, pressure   are given for the two-layer water flows. It is interesting that our method and results are also valid for the multi-layer water waves. Let the number of layers of water waves $n$ tend to infinity,
		we prove that the sequence of pressure in  the lowest layer    $\{P_1^n(x,y,z,t)\}_{n\geq1}$ is uniformly  convergent, if the density of each layer is bounded and the each surface of wave flows is uniformly convergent.
	\end{abstract}
	{\bf MSC}:	35Q35; 35R35; 76B15\\
	{\bf Key words}: Two-layer water wave; $\beta$-plane approximation; Centripetal forces;  Constant vorticity

	\section{Introduction}
	In recent years, water wave problems have received widespread attention and research from scholars, especially three-dimensional water wave problems. Three-dimensional water waves have extremely complex physical characteristics (see  \cite{book,cons2014,Iooss} ), which are usually described by nonlinear partial differential equation, and it is certainly one of the major challenges of the
	next years. An interesting problem is to study the dynamic behavior of three-dimensional water waves with Coriolis forces. Coriolis forces make the water wave problem highly complex in both mathematics and physics (see Constantin and Johnson  \cite{A. Cons} and Constantin\cite{cons-book}). It, along with vorticity and stratification, affects the dynamic behavior of water waves (see Wheeler \cite {wheeler}). To address such problems, the $f$-plane approximation and $\beta$-plane approximation are effective means.
	 The  pioneering mathematical study was initiated by Constantin \cite{cons1}, he found a solution for equatorial water waves in $\beta$-plane approximation.
	 Martin  \cite{martin1} studied the singe-layer  equatorial
	 wave near the  in the $\beta$-plane approximation.
	  Constantin and Johnson\cite{A. Cons} presented a meaningful work in the $\beta$-plane approximation.
	 Chu and Yang \cite{chu} considered centripetal force based on Martin's \cite{martin1} model. As an astonishing result, they  proved that
	 if centripetal forces exist and the vorticity is constant, then the surface of equatorial flow is actually flat and the vorticity is equal to 0. Subsequently, the results at any latitude are presented in Chu and Yang's  recent work \cite{chu2}.
	
Vorticity is quite important in describing the dynamic behavior of water waves. The study of water waves with vorticity has a long history, dating back to the work of Gerstner \cite{Gerstner} in  19th century.
Subsequently, a large number of scholars conducted in-depth research on rotational water waves  (see Constantin and Escher \cite{cons3} and  Constantin et al. \cite{cons2,cons4}. Especially, some studies on constant vorticity have achieved good results. Constantin \cite{cons2011} proved that the free surface water flow with constant non-zero vorticity below the wave train and above the plate is two-dimensional. After Constantin's work, Martin \cite{martin2018} proved if the vorticity is not equal to $0$, then  time-dependent 3D gravity water flows does not exist. Henry \cite{henry1} studied the water wave equation under the Coriolis force in the $\beta$-plane approximation and  the exact solution   is presented.
Groves and Wahl$\acute{e}$n \cite{Wahlen} studied the existence of small-amplitude Stokes and solitary gravity water waves  with an arbitrary distribution of vorticity. Henry \cite{henry2016} promoted the research on geophysical fluid dynamics by presenting the solution to  a $\beta$-plane approximation of the water wave equation with Coriolis and centripetal forces for the equatorial current.
Wang et al. \cite{wang1,wang2} applied the water wave theory to solve atmospheric problems and obtained the  solution of Ekman flow in the $f$-plane approximation  and $\beta$-plane approximation.
 Later, Martin \cite{martin2023}  extended the previous work \cite{martin2018} to the two-layer  water waves and obtained the Liouville-type results. Henry \cite{Henry2} studied the underlying fluid motion in two-layer water flows. The latest work on more general vorticity can be referenced in Chu and Escher\cite{c-e}, Chu et al. \cite{chu3}, Ionescu-Kruse \cite{IONESCU-KRUSE}, Dai and Zhang \cite{z-d} and Basu and Martin \cite{basu}.

Different from the previous works of Martin \cite{martin2023} and Henry\cite{Henry2} on the  two-layer water waves, we study the dynamical behavior of  constant vorticity two-layer water waves in  {\bf $\beta$-plane approximation}.   We obtained the solution of two-layer water waves and proved that the pressure is independent of the stratification of water waves. And a comparison  with Chu and Yang's work \cite{chu} of single layer  equatorial currents is presented. We do not need to assume that the wave surface has a specific traveling wave form, and the pressure function we obtain is a function related to depth and water wave surfaces. And our results can be extended to multi-layer water waves. Let the number of layers of water waves $n$ tend to infinity,
we prove that the squence of pressure in  the lowest layer    $\{P_1^n(x,y,z,t)\}_{n\geq1}$ is uniformly  convergent, if the density of each layer is bounded and the each surface of wave flows is uniformly convergent.

	\section{Double-layer three-dimensional water wave equations}

	It is reasonable to imagine the Earth as a perfect sphere with radius $R=6378$ km (see \cite{chu}). And the Earth's rotational speed is approximately constant at $\Omega = 7.29 \times10^5$ rad/s. Let the $x$-axis be direction of horizontal water flow due east, the $y$-axis horizontally
	due north and the $z$-axis vertically upward. Under the above definition, the water flows model in the $\beta$-plane approximation with Coriolis term and centripetal
	forces is described by following governing equations
	\begin{equation}\label{original}
		\begin{cases}
		u_{t}+uu_{x}+vu_{y}+wu_{z}+2\Omega w-\beta yv=-\frac{P_{x}}{\rho},\\
		v_{t}+uv_{x}+vv_{y}+wv_{z}+\beta  yu+\Omega^2y=-\frac{P_{y}}{\rho},\\
		w_{t}+uw_{x}+vw_{y}+ww_{z}-2\Omega u-\Omega^2R=-\frac{P_{z}}{\rho}-g,
		\end{cases}
	\end{equation}
with the mass conservation condition:
\begin{equation}\label{mass}
u_{x}+v_{y}+w_{z}=0.
\end{equation}
 Since equations \eqref{original} and \eqref{mass} hold both in the lower and upper layers,  we temporarily omit the subscript representing the number of layers in the following analysis.
	By taking $\frac{\widetilde{P}}{\rho}=\frac{P}{\rho}-\frac{\Omega^2}{2}y^2+(\Omega^2R-g)z$, equation \eqref{original} is transformed into
	\begin{equation}\label{transform}
		\begin{cases}
			u_t+uu_x+vu_y+wu_z+2\Omega w-\beta yv=-\frac{\widetilde{P}_{x}}{\rho},\\
		v_t+uv_x+vv_y+wv_z+\beta yu=-\frac{\widetilde{P}_{y}}{\rho},\\
		w_t+uw_x+vw_y+ww_z-2\Omega u=-\frac{\widetilde{P}_{z}}{\rho}.
		\end{cases}
	\end{equation}
	In this paper, we approximate that vorticity is fixed in every layer of water waves (the vorticity in different layers can be unequal) and   denoted as
\begin{equation}\label{wodu}
\Lambda=(\Lambda_1,\Lambda_2,\Lambda_3)=(w_y-v_z,u_z-w_x,v_x-u_y).
\end{equation}
Combining \eqref{transform} with  \eqref{wodu}, we obtain
	\begin{equation}\label{line-PDE}
		\begin{cases}
		\Lambda_1u_x+(2\Omega+\Lambda_2)u_y+(\Lambda_3+\beta y)u_z=0,\\
		\Lambda_1v_x+(2\Omega+\Lambda_2)v_y+(\Lambda_3+\beta y)v_z=0,\\
		\Lambda_1w_x+(2\Omega+\Lambda_2)w_y+(\Lambda_3+\beta y)w_z-\beta v=0.
		\end{cases}
	\end{equation}
And we assume $\Lambda_2+2\Omega\ne0$ in the  subsequent study.

The three-dimensional water wave problem we are studying is actually equivalent to the linear partial differential equations problem above. For this problem, we  solve it by using the characteristic equation method in the next section.

For the double-layer water wave problems, the situation is more complex (see figure 1). Although double-layer water waves come into contact with each other on adjacent internal wave surfaces, different layers of water waves typically exhibit completely different dynamic behaviors (see \cite{martin2023}).
In this paper, we assume the pressure is balanced at the interface of water waves in different layers.
In other words, for the case of double-layer water waves, the pressure of the upper wave $P_2$ is equal to the pressure $P_1$ of the lower wave on the inner wave surface. That is $
P_1(x,y,\eta_1(x,y,t),t)=P_2(x,y,\eta_1(x,y,t),t).$

\begin{figure}[h]
	\centering
	\includegraphics[width=10cm]{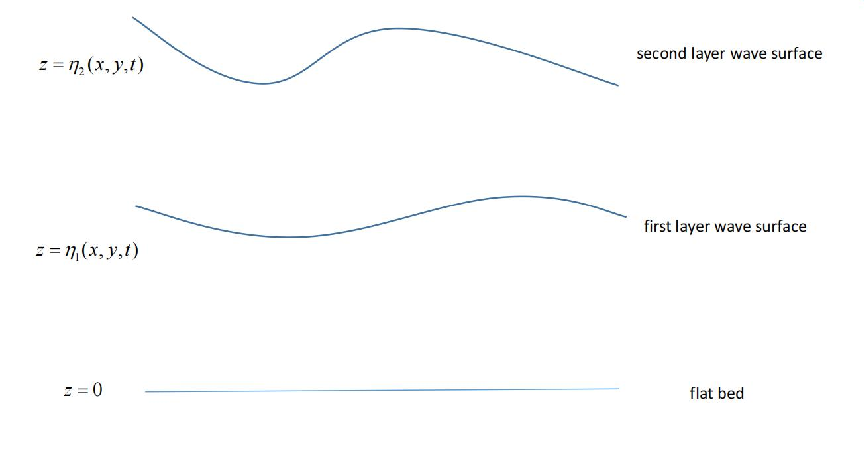}
	\caption{Schematic diagram of two-layer water waves.}
	\label{figure}
\end{figure}

\section{Main results}
We first give the results on the lower fluid flow in the case of  two-layers water flows.
\begin{theorem}\label{theorem 1}
In double-layer water waves, 	assume pressure  $P_{1y}$ in lower water wave and inner wave surface $\eta_{1y}$ are bounded.  Then the solution of \eqref{original} in the lower water waves is
	\[
		u_1=-\frac{\Omega^2}{\beta}
	,\ v_1=0,\ w_1=0,
	\]
	and
	\[ P_1=\rho_1[(\frac{2\Omega^3}{\beta}-\Omega^2R+g)z+c_1(t)],\]
where $c_1$ is a time dependent function.
	\end{theorem}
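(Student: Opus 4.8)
The plan is to regard \eqref{line-PDE} as a first-order linear transport system, integrate it along the characteristics of the vector field $\mathbf{V}=(\Lambda_1,\,2\Omega+\Lambda_2,\,\Lambda_3+\beta y)$ associated with it, then use the constant-vorticity relations \eqref{wodu} together with the boundedness hypotheses to collapse the resulting arbitrary functions to constants, and finally read off the pressure from the momentum balance \eqref{original}.

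First I would solve the characteristic system $\dot{x}=\Lambda_1$, $\dot{y}=2\Omega+\Lambda_2$, $\dot{z}=\Lambda_3+\beta y$. Writing $a:=2\Omega+\Lambda_2\neq0$, the only nonconstant coefficient is linear in $y$, so elementary integration produces the two independent invariants
\[
\xi=x-\frac{\Lambda_1}{a}\,y,\qquad \eta=z-\frac{\Lambda_3}{a}\,y-\frac{\beta}{2a}\,y^{2}.
\]
Since the first two equations of \eqref{line-PDE} are homogeneous, $u$ and $v$ are constant along characteristics, so $u=U(\xi,\eta,t)$ and $v=\mathcal{V}(\xi,\eta,t)$; integrating the inhomogeneous third equation along the same curves gives $w=\frac{\beta}{a}\,\mathcal{V}(\xi,\eta,t)\,y+W(\xi,\eta,t)$ for a further arbitrary function $W$.

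Next I would force this field to realize the prescribed constant vorticity by substituting the three expressions into $v_x-u_y=\Lambda_3$, $u_z-w_x=\Lambda_2$ and $w_y-v_z=\Lambda_1$ and matching the coefficients of the powers of $x$ and $y$ that appear (the invariant $\eta$ contributes a $y^{2}$, and $w$ an explicit $y$). This successively forces $U_\eta=0$, $\mathcal V_\xi=\mathcal V_\eta=0$, $W_\eta=0$ and $W_\xi=-\Lambda_2$, so that $\mathcal V$ is constant, $u=U(\xi,t)$ is affine in the horizontal variables, and $w=\Lambda_1 y-\Lambda_2 x+\widetilde W(t)$, subject to the algebraic identity $\beta\mathcal V=2\Omega\Lambda_1$. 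The standing requirement that the velocity be bounded on the fluid domain removes the affine growth of $u$, giving $\Lambda_3=0$ and $u=U(t)$; and the kinematic condition $w=\eta_{1t}+u\,\eta_{1x}+v\,\eta_{1y}$ at the interface $z=\eta_1$, whose right-hand side stays bounded because $\eta_{1y}$ (and the remaining interface slopes and speed) are bounded while the left-hand side $\Lambda_1 y-\Lambda_2 x+\widetilde W(t)$ grows linearly, forces $\Lambda_1=\Lambda_2=0$. Hence $v_1=2\Omega\Lambda_1/\beta=0$ and $w=\widetilde W(t)$ is spatially uniform; impermeability at the flat bed of the lower layer then yields $w_1=0$. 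In particular the vorticity in the lower layer is forced to vanish identically.

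With $v_1=w_1=0$ and $u_1=U(t)$ independent of the space variables, the $y$-component of \eqref{original} reduces to $\beta y\,U(t)+\Omega^2 y=-P_{1y}/\rho_1$, i.e. $P_{1y}=-\rho_1\,y\,(\beta U(t)+\Omega^2)$; boundedness of $P_{1y}$ for all $y$ forces $\beta U(t)+\Omega^2\equiv0$, that is $u_1=-\Omega^2/\beta$, and then $P_{1y}=0$. Feeding $u_1=-\Omega^2/\beta$, $v_1=w_1=0$ back into the $x$- and $z$-components of \eqref{original} gives $P_{1x}=0$ and $-2\Omega u_1-\Omega^2R=-P_{1z}/\rho_1-g$, and integrating the latter in $z$ (the integration ``constant'' being independent of $x,y$ but free in $t$) produces exactly the stated profile $P_1=\rho_1[(\cdots)z+c_1]$ with $c_1=c_1(t)$. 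The main obstacle is the reduction step: showing rigorously that the two stated boundedness hypotheses, rather than a full battery of decay assumptions, already annihilate every nonconstant part of $U$, $\mathcal V$, $W$ and the linear-in-$(x,y)$ growth of $w$. Tracking which hypothesis eliminates which term, and verifying that the quadratic growth of the characteristic variable $\eta$ in $y$ does not smuggle in admissible nonconstant profiles, is the delicate part; the remaining momentum computations are routine.
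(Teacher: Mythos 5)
Your characteristic construction and the vorticity--matching are the same as the paper's, but the step in which you pass from $u=U(\xi,\eta,t)$ to $u=U(t)$ has a genuine hole: you never invoke the incompressibility condition \eqref{mass}, and that is exactly the ingredient the paper uses at this point. After the matching in powers of $y$, the only relation constraining $U_\xi$ is the one coming from $v_x-u_y=\Lambda_3$, namely $\Lambda_3=\mathcal{V}_\xi+\frac{\Lambda_1}{2\Omega+\Lambda_2}U_\xi=\frac{\Lambda_1}{2\Omega+\Lambda_2}U_\xi$. This forces $U_\xi$ to be constant (your ``affine'' claim) only when $\Lambda_1\neq0$. When $\Lambda_1=0$ --- which is precisely the situation the theorem ends up in --- this relation yields $\Lambda_3=0$ but leaves $U_\xi$ completely unconstrained, so $u$ may be an arbitrary bounded function of $\xi=x$; boundedness of the velocity (which, moreover, is not among the theorem's hypotheses) cannot then reduce $u$ to a function of $t$ alone, yet your momentum computation afterwards assumes exactly that. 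The paper closes this case in one line: after the reductions $v_y=0$ and $w_z=0$, the mass conservation equation \eqref{mass} becomes $(\Lambda_2+2\Omega)U_{m}=0$, so $U$ is independent of the horizontal invariant ($m$ in the paper's notation, $\xi$ in yours) and $\Lambda_3=0$ in all cases, with no extra assumptions. (Your argument could be repaired a posteriori: with $v=w=0$ the $y$-momentum equation gives $P_{1y}=-\rho_1 y\bigl(\beta U+\Omega^2\bigr)$ pointwise, and boundedness of $P_{1y}$ forces $U\equiv-\Omega^2/\beta$ even if $U$ depends on $\xi$; but as written the reduction step fails.)

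Two further mismatches are worth recording. First, to kill $\Lambda_1,\Lambda_2$ you take a detour through the kinematic interface condition, which requires boundedness of $\eta_{1t}$, $\eta_{1x}$ and of the velocity field, none of which are hypotheses of the theorem; the paper instead uses only the flat-bed impermeability $w_1=0$ on $z=-d$, which you reserve for $\widetilde W$ alone: since $w=\Lambda_1y-\Lambda_2x+\widetilde W(t)$ must vanish identically in $(x,y)$ on the bed, this single condition already forces $\Lambda_1=\Lambda_2=\widetilde W=0$. Second, your closing claim that the $z$-momentum balance ``produces exactly the stated profile'' is not accurate: with \eqref{original} as written and $u_1=-\Omega^2/\beta$, one gets $P_{1z}=\rho_1\bigl(\Omega^2R-g-\frac{2\Omega^3}{\beta}\bigr)$, the opposite sign of the stated coefficient of $z$. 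This actually exposes an internal inconsistency of the paper itself: the substitution defining $\widetilde P$ does not transform \eqref{original} into \eqref{transform} with the signs as printed, and the paper's own proof produces $a(t)=+\Omega^2/\beta$, contradicting the stated $u_1=-\Omega^2/\beta$ while matching the stated $P_1$. Your direct route through \eqref{original} is the sound one, but the sign discrepancy should have been flagged rather than asserted away.
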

\begin{proof}
Based on the assumption of the vorticity  \eqref{wodu}, we obtain
\[
w_{1zx}=u_{1zz}\quad \rm{and} \quad u_{1yy}=v_{1zy}.
\] 
According to the mass conservation condition \eqref{mass}, we have
\[
\Delta v_1=v_{1xx}+v_{1yy}+v_{1zz}=(u_{1x}+v_{1y}+w_{1z})_y=0. 
\]
Analogously, we obtain 
\[\Delta u_1=\Delta w_1=0,\]
which means $u_1$, $v_1$ and $w_1$ are all harmonic functions in the lower fluid domain. Moreover $w_{1z}$ is also harmonic function.\\
According to the third equation of \eqref{line-PDE}, we have
\[
\Delta(yw_{1z})=0,
\]
i.e.
\[
y\Delta w_{1z}+2w_{1zy}=0.
\]
From this, we  immediately obtain
\[
w_{1zy}=0.
\]
Analogously, $v_{1zy}=u_{1zy}=0$.\\
Moreover, according to \eqref{wodu}, we obtain
\[
u_{1zy}=w_{1xy}=v_{1zy}=w_{1yy}=0.
\]
Based on the above analysis, we conclude that $w_{1y}$ is independent of $x$, $y$ and $z$.
Combining with the  boundary conditions \[
w_1\equiv 0  \quad\ \rm{on} \quad z=0,
\]
we infer that 
\[
w_{1y}\equiv0.
\]
From \eqref{wodu}, we immediately conclude that $v_{1z}=-\Lambda_1$.\\
Then by differentiating with respect to $y$ in the third equation of \eqref{line-PDE}, we  obtain
\[
\Lambda_1w_{1xy}+(\lambda_2+2\Omega)w_{1yy}+(\Lambda_3+\beta y)w_{1zy}+\beta w_{1z}-\beta v_{1y}=0,
\]
Moreover, we have $w_{1z}=v_{1y}$,
which implies \[
w_{1zz}=v_{1yz}=(-\lambda_1)_y=0,
\]
and $v_{1yy}=w_{1zy}=0.$
Note that $v_1$ and $w_1$ are harmonic functions ($\Delta v_1=\Delta w_1=0$). \\
We can infer that\[
v_{1xx}=w_{1xx}=0.
\]
Combining with \eqref{wodu}, we obtain that
\[
u_{1xz}=w_{1xx}=0.
\]
Analogously,  $u_{1xy}=v_{1xx}=0$.
Moreover, by differentiating with respect to $z$, we have
\[
\Lambda_1u_{1xz}+(\Lambda_2+2\Omega)u_{1yz}+(\Lambda_3+\beta y)u_{1zz}=0,
\]
which implies 
\begin{equation}\label{uzz}
(\lambda_3+\beta y)u_{1zz}=0.
\end{equation}
Note that \eqref{uzz} holds within the lower fluid domain. Then $u_{1zz}=0$ holds in the lower fluid domain, when $y\ne -\frac{\Lambda_3}{\beta}.$ And  by the continuity of $u_{1zz}$ in the lower fluid domain, then $u_{1zz}=0$ holds within the lower fluid domain.

It is not difficult to obtain that $w_{1xz}=0$ by the definition of the  vorticity  \eqref{wodu}.

Since $w_{1xy}=w_{1xx}=0$, we obtain that $w_{1x}$ only depends on the time $t$. Note that the boundary condition $w_1\equiv 0$ on $z=0$. Then $w_{1x}=0$ holds within the lower fluid domain.
And we immediately conclude that
$w_{1zx}=0$ and $\Lambda_2=u_z$.

Combining with $w_{1zy}=w_{1zz}=0$, we conclude that $w_{1z}$ is independent of $x$, $y$ and $z$.
Differentiating the first equation of \eqref{line-PDE} and noting that $v_{1zx}=(-\Lambda_1)_x=0$, $v_{1xx}=0$, we obtain 
\[
(\Lambda_2+2\Omega)v_{1yx}=0
\]
Based on the assumption $\Lambda_2+2\Omega\ne0$,  $v_{1yx}=0$ holds within the fluid domain.\\
Moreover, we have 
\[u_{1yy}=v_{1yx}=0.
\]
Combining the first and second equations of \eqref{line-PDE} with  $u_{1xy}=u_{1yy}=v_{1xy}=v_{1yy}=0$, we obtain
\begin{equation}\label{uz}
u_{1z}=v_{1z}=0\ \quad \rm{within\ the\ fluid\ domain}, 
\end{equation}
which implies
\[
\Lambda_1=\Lambda_2=0.
\]
Based on\eqref{line-PDE}, we conclude that 
\begin{equation}\label{uy}
u_{1y}=v_{1y}=0 \ \quad \rm{within\ the\ fluid\ domain}.
\end{equation}
Moreover, based on \eqref{mass}, we have $w_{1z}=0.$
Then combining with the third equation of \eqref{line-PDE}, we obtain 
\[
v=0 \ \quad \rm{within\ the\ fluid\ domain}.
\]
 By the definition of the  vorticity, we have 
 \[
 \Lambda_3=v_{1x}-u_{1y}=0.
 \]
It is easy to see that
 \[
u_x=-v_y-w_z=0.
 \]
 Combining this with \eqref{uy} and \eqref{uz}, we have  $u_1=a_1(t)$ for some function $a_1.$
 And according to \eqref{transform}, we obtain
 \begin{equation}
 	\begin{cases}
 		\widetilde{P}_{1x}=-\rho_1 a_1'(t),\\
 		\widetilde{P}_{1y}=-\rho_1\beta y a_1(t),\\
 		\widetilde{P}_{1z}=2\rho_1\Omega a_1(t).
 	\end{cases}
 \end{equation}
 Moreover,
 $$\widetilde{P}_1=\rho_1\left(-a_1'(t)x-\frac{\beta}{2}a_1(t)y^2+2\Omega a_1(t)z+c_1(t)\right),$$
 and
 $$P_1=\rho_1\left(-a_1'(t)x+(\frac{\Omega^2}{2}-\frac{\beta}{2}a_1(t))y^2+(2\Omega a_1(t)-\Omega^2R+g)z+c_1(t)\right).$$
 On the free surface, we have
 \[
 [P_{1y}+P_{1z}\eta_{1y}]\bigg|_{z=\eta_1(x,y,t)}=\rho_1[\left(\Omega^2-\beta a_1(t)\right)y+\left(2\Omega a_1(t)-\Omega^2R+g\right)\eta_{1y}].
 \]
 And we  limit the variation of $P_1$ in the $y$-direction to be bounded. That means there exists a $M>0$ such that 
 \[
 -M<[P_{1y}+P_{1z}\eta_{1y}]\bigg|_{z=\eta_1}  <M
 \]
 holds for any $y$ within the fluid domain.
 Since $a_1(t)$ and $\eta_{1y}$ is finite, then  $u_1=a_1(t)=\frac{\Omega^2}{\beta}.$
 We immediately obtain
 \begin{equation}\label{P_1}
 	P_1=\rho_1[(\frac{2\Omega^3}{\beta}-\Omega^2R+g)z+c_1(t)],
 \end{equation}
 which is very natural in physics.
\end{proof}

\begin{remark}
Unlike Chu and Yang's work \cite{chu}, we do not require the wave surface to have a traveling wave form $\eta_1(x,y,t)=\eta_1(x-ct,y).$ And the pressure we obtain is only related to depth and the surfaces of the water flows, which is shown in the following theorem. By the method of   Theorem \ref{theorem 1}, we can prove the main results  in Chu and  Yang's work  on the single-layer water wave (Theorem 3.2, \cite{chu}).
\end{remark}

\begin{theorem}

In the two-layer water wave, we assume that $P_{2y}$, $\eta_{2y}$ are bounded and the velocity vector $w$  is  continuous on the  internal wave $z=\eta_1(x,y,t)$. 
If the condition in Theorem \ref{theorem 1} still holds, then
\[
P_2=k\rho_2(z- \eta_2(x,y,t))+P_{atm}
\]
and
\[
P_1=\rho_1kz+(\rho_2-\rho_1)k\eta_1(x,y,t)-\rho_2k\eta_2(x,y,t),
\]
where $k=\frac{2\Omega^3}{\beta}-\Omega^2R+g.$
\end{theorem}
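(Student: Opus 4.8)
The plan is to run the argument of Theorem~\ref{theorem 1} a second time in the upper layer and then to close the problem with the two physical conditions of Section~2 (balance of pressure at the interface, atmospheric pressure at the free surface). First I would note that the governing system \eqref{line-PDE} has the identical form in the upper layer, so the method of characteristics together with \eqref{mass}, \eqref{wodu} and the boundedness of $P_{2y}$ and $\eta_{2y}$ reproduces verbatim the computation of Theorem~\ref{theorem 1}: one is led to $u_2=a_2(t)$, $v_2=\frac{2\Omega\Lambda_1}{\beta}$, $w_2=b_2(t)-\Lambda_2+\Lambda_1 y$ (with $\Lambda$ now denoting the upper-layer vorticity), and the boundedness requirement on the $y$-variation of $P_2$ along $z=\eta_2$ forces $a_2(t)=\frac{\Omega^2}{\beta}$, hence
$$P_2=\rho_2\big[kz+c_2(t)\big],\qquad k=\tfrac{2\Omega^3}{\beta}-\Omega^2R+g,$$
for a function $c_2$ of $t$ alone. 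The only genuine change is the lower boundary: in Theorem~\ref{theorem 1} the rigid bottom $w_1|_{z=-d}=0$ eliminated $b_1,\Lambda_1,\Lambda_2$, whereas here I would instead invoke continuity of the vertical velocity across the interface. Since Theorem~\ref{theorem 1} already gives $w_1\equiv 0$, matching at $z=\eta_1$ forces $w_2|_{z=\eta_1}=0$; as $w_2$ is independent of $z$ and affine in $y$, vanishing on the whole interface forces $\Lambda_1=0$ and $b_2=\Lambda_2$, so $w_2\equiv 0$ and $v_2\equiv 0$ throughout the upper layer.

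With both pressures of the form $P_i=\rho_i\big[kz+c_i(t)\big]$, the two formulas follow from the boundary data. For $P_2$ I would impose the dynamic free-surface condition $P_2=P_{atm}$ on $z=\eta_2(x,y,t)$, giving $\rho_2 c_2(t)=P_{atm}-k\rho_2\eta_2$; substituting back yields
$$P_2=k\rho_2\big(z-\eta_2(x,y,t)\big)+P_{atm}.$$
For $P_1$ I would then use the interface balance $P_1(x,y,\eta_1,t)=P_2(x,y,\eta_1,t)$ stated in Section~2. Evaluating both sides at $z=\eta_1$ gives $\rho_1\big[k\eta_1+c_1(t)\big]=k\rho_2(\eta_1-\eta_2)+P_{atm}$; solving for $\rho_1c_1(t)$ and inserting it into $P_1=\rho_1kz+\rho_1c_1(t)$ produces
$$P_1=\rho_1kz+(\rho_2-\rho_1)k\,\eta_1(x,y,t)-\rho_2k\,\eta_2(x,y,t),$$
after the atmospheric pressure is normalised to zero (or absorbed into the reference level), which is the asserted expression.

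I expect the real difficulty to lie in the first paragraph rather than in the algebra of the last one. In the single-layer case it was the rigid flat bottom that killed the vorticity components and the residual time functions $b(t),\Lambda_1,\Lambda_2$; in the upper layer this boundary is replaced by the moving interface, so the argument must rely instead on the matching of the vertical velocity there, which is the step most in need of care. A second, subtler point I would keep explicit is the consistency of the final formulas with the structural fact that $c_1$ and $c_2$ depend on $t$ only: since $P_2=\rho_2[kz+c_2(t)]$ carries no $(x,y)$-dependence away from the surface, the relation $\rho_2 c_2(t)=P_{atm}-k\rho_2\eta_2$ quietly imposes a compatibility constraint on $\eta_1,\eta_2$ (they may vary with $t$, but the relevant combination is spatially rigid). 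I would record this compatibility so that the two displayed pressure formulas are manifestly self-consistent rather than leaving it implicit.
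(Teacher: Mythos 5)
Your proposal is correct in substance and, for the main algebra, follows the same route as the paper: rerun the analysis of Theorem~\ref{theorem 1} in the upper layer, use the boundedness of $P_{2y}$ and $\eta_{2y}$ to force $a_2(t)=\frac{\Omega^2}{\beta}$ and hence $P_2=\rho_2\left[kz+c_2(t)\right]$, then pin down $c_2$ and $c_1$ from the free-surface condition $P_2=P_{atm}$ on $z=\eta_2$ and the interface balance $P_1=P_2$ on $z=\eta_1$ (you apply these two conditions in the opposite order from the paper, which is immaterial). The genuine difference lies in your first paragraph. The paper disposes of the upper-layer velocity field with the phrase ``by the similar analysis in \eqref{line-PDE}--\eqref{P_1}'', silently carrying over the conclusions $v_2=w_2=0$ and the vanishing of the relevant vorticity components even though the rigid bottom $w|_{z=-d}=0$ --- precisely the condition that killed $b(t)$, $\Lambda_1$, $\Lambda_2$ in Theorem~\ref{theorem 1} --- has no counterpart in the upper layer. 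You correctly identify this as the delicate step and close it by matching vertical velocities at the interface: since $w_1\equiv0$, you demand $w_2|_{z=\eta_1}=0$, and the affine structure of $w_2$ then forces $w_2\equiv0$, $v_2\equiv0$. This makes the proof complete where the paper leaves a hole, but at the cost of a hypothesis the paper never states: its only interface condition is the pressure balance $P_1=P_2$ on $z=\eta_1$, and for inviscid layered flows only the \emph{normal} velocity need be continuous across an interface, so full vertical-velocity matching on a non-flat interface is a strictly stronger assumption (the standard kinematic conditions at the interface and free surface, combined with boundedness of the surfaces and of $P_{2y}$, would eliminate $b_2$ and the vorticity components without this extra physics). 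Finally, your two closing observations are apt and point at real loose ends in the paper: the stated formula for $P_1$ indeed omits $P_{atm}$ (the paper's own proof ends with $P_1=\rho_1kz+(\rho_2-\rho_1)k\eta_1+P_{atm}-\rho_2k\eta_2$ while its statement drops the $P_{atm}$ term), and the requirement that $c_2$ depend on $t$ alone does impose a spatial-rigidity constraint on $\eta_1,\eta_2$, visible in the paper's own relation $c_2(t)=\frac{\rho_1}{\rho_2}\left(k\eta_1+c_1(t)\right)-k\eta_1$, whose right-hand side appears to depend on $(x,y)$.
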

\begin{proof}
According to the chain rule, we obtain
\[
P_x(x,y,\eta_1(x,y,t),t)=\eta_{1x}P_z|_{z=\eta_1(x,y,t)}.
\]
Since the speed velocity $w$ is continuous  on the bottom of upper flow, we have
\[
w_2(x,y,\eta_1,t)=w_1(x,y,\eta_1,t)=0.
\]
By  the similar analysis in the proof of Theorem \ref{theorem 1}, we obtain
\[
P_1=\rho_1\left[-a'_1(t)x+(\frac{\Omega^2}{2}-\frac{\beta}{2}a_1(t))y^2+(2\Omega a_1(t)-\Omega^2R+g)z+c_1(t)\right],
\]
\[
P_2=\rho_2\left[-a'_2(t)x+(\frac{\Omega^2}{2}-\frac{\beta}{2}a_2(t))y^2+(2\Omega a_2(t)-\Omega^2R+g)z+c_2(t)\right].
\]

Note that $P_{1y}$, $P_{2y}$ and $\eta_{1y}$, $\eta_{2y}$ are bounded, we obtain $a_2(t)=\frac{\Omega^2}{\beta}.$

Since $P_1(x,y,\eta_1(x,y,t),t)=P_2(x,y,\eta_1(x,y,t),t),$
we  immediately get
\[
 c_2(t)=\frac{\rho_1}{\rho_2}(k\eta_1(x,y,t)+c_1(t))-k\eta_1.
\]
Moreover,
\[
P_2(x,y,z,t)=\rho_2kz+c_2(t).
\]
According to the boundary condition $P_2=P_{atm}$ on $z=\eta_2(x,y,t)$, we obtain
\[
P_{atm}=\rho_2k\eta_2(x,y,t)+c_2(t).
\]
Through the boundary condition on the free surface $z=\eta_2(x,y,t)$,  we obtain the pressure functions:
\[
P_2=\rho_2k(z- \eta_2(x,y,t))+P_{atm},
\]
and
\[
 P_1=\rho_1kz+(\rho_2-\rho_1)k\eta_1+P_{atm}-\rho_2k\eta_2.
\]
\end{proof}

\begin{remark}
	Assume the densities are equal in each layer of water flows ($\rho_1=\rho_2$). Then  according to the continuity of pressure during the inner wave surface, we obtain $P_1=(\frac{2\Omega^3}{\beta}-\Omega^2R+g)(z- \eta_2(x,y,t))+P_{atm}.$ That implies that the pressure inside the entire water flow is only related to the depth of the water flow and the surface of the upper water flow.
	\end{remark}
\begin{remark}
It is worth mentioning that our results can be generalized to the $n$-layer water wave model (see Fig. 2) by induction method.
The pressure of the $i$-th layer water flows can be expressed as
\begin{equation}\label{P_i}
P_i=\rho_ikz+\sum_{j=i}^{n-1}k\rho_{j+1}(\eta_j(x,y,t)-\eta_{j+1}(x,y,t))-k\rho_i\eta_i+P_{atm}.
\end{equation}
If the number of layers of water waves $n$ tends to infinity, 
we obtain the following inequality by \eqref{P_i}:
\[
|P_1^{n+p}(x,y,z,t)-P_1^{n}(x,y,z,t)|\leq k\sup_{j\geq_1}{\rho_j}\sum_{j=n}^{n+p}|\eta_j(x,y,z,t)-\eta_{j+1}(x,y,z,t)|.
\]
Moreover, $\{P_1^n(x,y,z,t)\}_{n\geq1}$ is uniformly convergent, if $\{\rho_i\}_{i\geq1}$ is bounded and
 $\{\eta_i(x,y,t)\}_{i\geq1}$ is uniformly convergent.

\end{remark}

\begin{figure}[h]
	\centering
	\includegraphics[width=10cm]{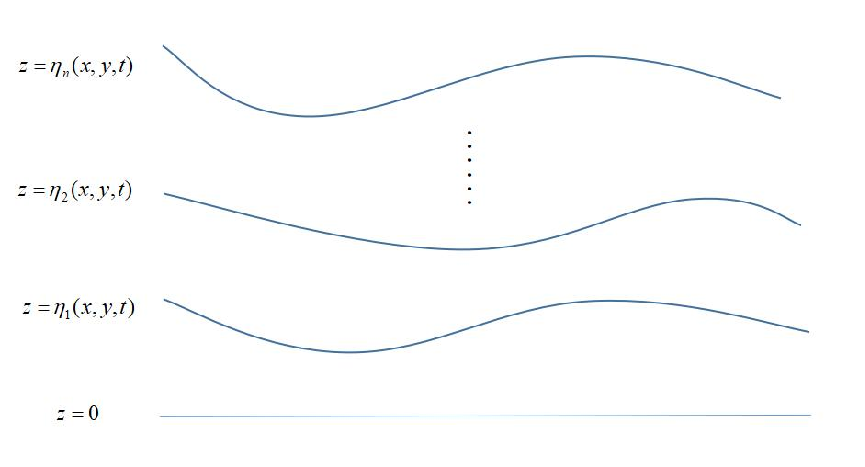}
	\caption{Schematic diagram of $n$-layer water waves.}
	\label{figure}
\end{figure}

\begin{remark}
	For an $n$-layer  water wave with a flat surface at the top layer (i.e. $\eta_n(x,y,t)=d_0$), we assume the densities are equal in each layer of the water flows $(\rho_1=\rho_2=\cdots=\rho_n).$  Then we obtain $c_1(t)=P_{atm}-(\frac{2\Omega^3}{\beta}-\Omega^2R+g)d_0.$ And in the entire water flow,  $P=(\frac{2\Omega^3}{\beta}-\Omega^2R+g)(z-d_0)+P_{atm}$. The pressure inside the water wave is independent of the number of layers of waves and the internal wave surface.
	\end{remark}

\section*{Declaration of Interests.}
 The authors report no conflict of interest.


\begin{thebibliography}{999}
	\bibitem{book}
	R. Johnson, A Modern Introduction to the Mathematical Theory of Water Waves, Cambridge University Press, 1997.




\bibitem{cons2014}
A. Constantin, Some nonlinear, equatorially trapped, nonhydrostatic
internal geophysical waves,  J. Phys. Oceanogr., 44(2)  (2014) 781-789.

\bibitem{Iooss}
G. Iooss and P. Plotnikov, Asymmetrical three-dimensional travelling
gravity waves, Arch. Ration. Mech. Anal., 200(3) (2011)  789-880.


\bibitem{A. Cons}
A. Constantin, R.S. Johnson, An exact, steady, purely azimuthal equatorial flow with a free surface, J. Phys.
Oceanogr., 46 (2016) 1935–1945.


\bibitem{cons-book}
A. Constantin, Nonlinear Water Waves with Applications to Wave-Current Interactions and Tsunamis, CBMS-NSF
Conference Series in Applied Mathematics, vol. 81, SIAM, Philadelphia, 2011.


\bibitem{wheeler}
M. Wheeler, On stratified water waves with critical layers and Coriolis forces, Discrete Contin. Dyn. Syst., 8 (2019)
4747–4770.


\bibitem{cons1}
A. Constantin, On the modelling of equatorial waves, Geophys. Res. Lett., 39 (2012) L05602.



\bibitem{martin1}
C. Martin, On constant vorticity water flows in the $\beta$-plane approximation, J. Fluid Mech., 865 (2019) 762–774.
	
	
	
	\bibitem{chu}
	J. Chu, Y. Yang, Constant vorticity water flows in the equatorial $\beta$-plane
	approximation with centripetal forces, J. Differential Equations, 269 (2020) 9336–9347.
	
	

	
		\bibitem{chu2}
	J. Chu, Y. Yang, A cylindrical coordinates approach to constant vorticity
	geophysical waves with centripetal forces at arbitrary
	latitude, J. Differential Equations, 279 (2021) 46-62.
	
		\bibitem{Gerstner}
	F. Gerstner, Theorie der Wellen samt einer daraus abgeleiteten Theorie der Deichprofile, Ann. Phys., 2 (1809)
	412–445.
	
	

	
	\bibitem{cons3}
	 A. Constantin, J. Escher, Symmetry of steady periodic surface water waves with vorticity, J. Fluid Mech.,  498 (2004) 171–181.
	
		\bibitem{cons2}
	A. Constantin, M. Ehrnstr\"{o}m, E. Wahl\'{e}n, Symmetry of steady periodic gravity water waves with vorticity, Duke
	Math. J., 140 (2007) 591–603.
	
	\bibitem{cons4}
	 A. Constantin, R. Ivanov, C. Martin, Hamiltonian formulation for wave-current interactions in stratified rotational
	flows, Arch. Ration. Mech. Anal., 221 (2016) 1417–1447.
	
	\bibitem{cons2011}
	A. Constantin, Two-dimensionality of gravity water flows of constant nonzero vorticity beneath a surface wave
	train, Eur. J. Mech. B, Fluids, 30 (2011) 12–16.
	
	
		\bibitem{martin2018}
	C. Martin, Non-existence of time-dependent three-dimensional gravity water flows with
	constant non-zero vorticity, Phys. Fluids, 30 (2018)   107102.
	
	\bibitem{henry1}
	D. Henry, Equatorially trapped nonlinear waterwaves in a $\beta$-plane approximation with centripetal forces, J. Fluid
	Mech., 804 (2016) R1, 11 pp.
	
	
\bibitem{Wahlen}
M. Groves, E. Wahl$\acute{e}$n, Small-amplitude Stokes and solitary gravity water waves with an arbitrary
distribution of vorticity, Phys. D, 237 (2008) 1530-1538.
	
	\bibitem{henry2016}
	D. Henry, Equatorially trapped nonlinear water waves in a $\beta$-plane approximation with
	centripetal forces, J. Fluid Mech, (2016) 804 R1.
	
	\bibitem{wang1}
	J. Wang, M. Fe\v{c}kan, Y. Guan, Constant vorticity Ekman flows in the f-plane approximation, Discrete Cont Dyn-B,  27  (2022) 6619–6630.
	
	\bibitem{wang2}
	J. Wang, M. Fe\v{c}kan, Y. Guan, Constant vorticity Ekman flows in the $\beta$-plane approximation,  J. Math. Fluid Mech.,  23 (2021) 1-11.
	

	
	\bibitem{martin2023}
C. Martin, Liouville-type results for the time-dependent
three-dimensional (inviscid and viscous) water wave
problem with an interface, J. Differential Equations, 362 (2023) 88–105.

\bibitem{Henry2}
	D. Henry, G. Villari, Flow underlying coupled surface and internal waves, J. Differential Equations, 310 (2022) 404–442.
	
	
		\bibitem{c-e}
	J. Chu, J. Escher, Steady periodic equatorial water waves	with vorticity,  Discrete Contin. Dyn. Syst., 39 (2019) 4713-4729.
	
	\bibitem{chu3}
	J. Chu, X. Wang, L. Wang, Z. Zhang, A flow force reformulation of steady periodic fixed-depth irrotational equatorial flows, J. Differential Equations, 292 (2021) 220-246.
	
	
	
	\bibitem{IONESCU-KRUSE}
	D. Ionescu-Kruse, R. Ivanov, Nonlinear two-dimensional water waves with arbitrary
	vorticity, J. Differential Equations, 368 (2023) 317–349.
	
	\bibitem{z-d}
	G. Dai, Y. Zhang, Global bifurcation structure and some properties of
	steady periodic water waves with vorticity, J. Differential Equations, 349 (2023) 125–137.
	
	\bibitem{basu}
	B. Basu, C. Martin, Resonant interactions of rotational water waves in the equatorial f-plane approximation,
	J. Math. Phys., 59 (10) (2018) 103101, 9 pp.
	
	
	\end{thebibliography}
\end{document}